\DeclareFontFamily{OT1}{eusb}{} \DeclareFontShape{OT1}{eusb}{m}{n} {<5> <6> <7> <8> <9> <10> <11> <12> <14.4> eusb10}{}
\DeclareMathAlphabet{\eusb}{OT1}{eusb}{m}{n}
\DeclareFontFamily{OT1}{eusm}{} \DeclareFontShape{OT1}{eusm}{m}{n} {<5> <6> <7> <8> <9> <10> <11> <12> <14.4> eusm10}{}
\DeclareMathAlphabet{\eusm}{OT1}{eusm}{m}{n}
\DeclareFontFamily{OT1}{eufm}{} \DeclareFontShape{OT1}{eufm}{m}{n} {<5> <6> <7> <8> <9> <10> <11> <12> <14.4> eufm10}{}
\DeclareMathAlphabet{\mathfrak}{OT1}{eufm}{m}{n}
\DeclareFontFamily{OT1}{fraktura}{}
\DeclareFontShape{OT1}{fraktura}{m}{n} {<5> <6> <7> <8> <9> <10> <11> <12> <13> <14.4> [1.1] eufm10}{}
\DeclareMathAlphabet{\fraktura}{OT1}{fraktura}{m}{n}
\DeclareFontFamily{OT1}{cmfi}{} \DeclareFontShape{OT1}{cmfi}{m}{n} {<5> <6> <7> <8> <9> <10> <11> <12> <13> <14.4> [0.9] cmfi10}{}
\DeclareMathAlphabet{\cmfi}{OT1}{cmfi}{b}{n}
\DeclareFontFamily{OT1}{cmss}{} \DeclareFontShape{OT1}{cmss}{m}{n} {<5> <6> <7> <8> <9> <10> <11> <12> <13> <14.4> cmss10}{}
\DeclareMathAlphabet{\cmss}{OT1}{cmss}{m}{n}
\DeclareMathAlphabet{\mathpzc}{OT1}{pzc}{m}{it}
\newtheoremstyle{thm}{1.8ex}{1.8ex}{\itshape\rmfamily}{} {\bfseries\rmfamily}{}{2ex}{}
\newtheoremstyle{def}{1.8ex}{1.8ex}{\slshape\rmfamily}{} {\bfseries\rmfamily}{}{2ex}{}
\newtheoremstyle{rem}{1.8ex}{1.8ex}{\rmfamily}{} {\bfseries\rmfamily}{}{2ex}{}
\theoremstyle{thm}
\newcommand\cour[1]{{\fontfamily{pcr}\selectfont #1}}
\theoremstyle{thm}
\newtheorem{theorem}{Theorem}[section]
\newtheorem{lemma}[theorem]{Lemma}
\newtheorem*{Main Theorem}{Main Theorem.}
\newtheorem*{special theorem}{Lindeberg-Feller Theorem for Martingales}
\theoremstyle{def}
\theoremstyle{rem}
\newtheorem{remarks}[theorem]{Remarks}
\numberwithin{equation}{section}
\renewcommand{\section}{\secdef\sct\sect}
\newcommand{\sct}[2][default]{%
\refstepcounter{section}
\addcontentsline{toc}{section}{{\tocsection {}{\thesection}{\!\!\!\!#1\dotfill}}{}}
\vspace{0.7cm}
\centerline{\scshape\thesection.\ #1} \nopagebreak \vspace{0.2cm}}
\newcommand{\sect}[1]{%
\vspace{0.4cm} \centerline{\large\scshape\rmfamily #1}
\vspace{0.2cm}}
\renewcommand{\subsection}{\secdef\subsct\sbsect}
\newcommand{\subsct}[2][default]{\refstepcounter{subsection}
\addcontentsline{toc}{subsection}
{{\tocsection{\!\!}{\hspace{1.2em}\thesubsection}{\!\!\!\!#1\dotfill}}{}}
\nopagebreak\vspace{0.45\baselineskip} {\flushleft\bf
\thesubsection~\bf #1.~}
\\*[3mm]\noindent
\nopagebreak}
\newcommand{\sbsect}[1]{\vspace{0.1cm}\noindent
\textbf{#1.~}\vspace{0.1cm}}
\renewcommand{\subsubsection}{%
\secdef \subsubsect\sbsbsect}
\newcommand{\subsubsect}[2][default]{%
\refstepcounter{subsubsection} 
\addcontentsline{toc}{subsubsection}{{\tocsection{\!\!}
{\hspace{3.05em}\thesubsubsection}{\!\!\!\!#1\dotfill}}{}}
\nopagebreak
\vspace{0.15\baselineskip} \nopagebreak {\flushleft\rmfamily
\itshape\thesubsubsection
\ \rmfamily #1\/.}\ }
\newcommand{\sbsbsect}[1]{\vspace{0.1cm}\noindent
\rmfamily \itshape
\arabic{section}.\arabic{subsection}.\arabic{subsubsection} \
\sffamily #1\/.\ }
\renewcommand{\caption}[1]{%
\vglue0.5cm
\refstepcounter{figure}
\begin{minipage}{0.9\textwidth}\small {\sc Figure~\thefigure. }#1\end{minipage}}
\def\myffrac#1#2 in #3{\raise 2.6pt\hbox{$#3 #1$}\mkern-1.5mu\raise 0.8pt\hbox{$#3/$}\mkern-1.1mu\lower 1.5pt\hbox{$#3 #2$}}
\newcommand{\ffrac}[2]{\mathchoice%
{\myffrac{#1}{#2} in \scriptstyle}
{\myffrac{#1}{#2} in \scriptstyle}
{\myffrac{#1}{#2} in \scriptscriptstyle}
{\myffrac{#1}{#2} in \scriptscriptstyle}
}
\definecolor{lightgray}{gray}{0.5}
\newcommand{\twocite}[2]{\cite{#1}--\cite{#2}}
\newcommand{\PP}        {\mathbb P}
\newcommand{\p}        {\cmss P}
\newcommand{\Z}        {\mathbb Z}
\newcommand{\E}        {\mathbb E}
\newcommand{\scrF}      {\mathscr{F}}
\newcommand{\AAA}         {\mathcal{A}}
\newcommand{\scrC}      {\mathscr{C}} 
\newcommand{\scrH}      {\mathscr{H}}
\newcommand{\scrG}      {\mathscr{G}}
\newcommand{\1}{\mathds 1}
\title[Anomalous heat kernel]{On heat kernel decay \\
for the random conductance model 
}
\author[ 
Boukhadra 
]
{
Omar~Boukhadra
}
\begin{document} 

\maketitle
\vspace{-5mm}
\centerline{\it To the innocence of Timy and Kus}
\vspace{0.3cm}
\centerline{{Department of Mathematics},} 
\centerline{{University of Constantine 1}}
\centerline{\small\cour{\url{boukhadra@umc.edu.dz}}}

\smallskip

\vspace{-2mm}
\begin{abstract} 
\textit{
We study discrete time random walks in an environment of i.i.d. non-negative bounded conductances in $\Z^d$. We are interested in the anomaly of the heat kernel decay. We improve recent results and techniques.
}
\newline\\
\noindent
{\it\textbf{keywords}~:
Markov chains, Random walk, Random environments,
Random conductances,  Percolation.
\newline
\noindent
{\textbf{AMS 2000 Subject Classification}}~:
60G50; 60J10; 60K37.
}
\end{abstract}

\section{Introduction and statement of the results}

The present work focuses on the {Random Conductance Model} (RCM) with i.i.d. conductances on the grid $\Z^d$. Explicitly, let $E_d$ be the set of non oriented nearest-neighbor bonds, i.e. $E_d = \{ e = \{x, y\}: x, y \in \Z^d, | x - y | =1 \}$. Let $(\omega_e)_{e \in E_d}$ be a family of i.i.d. (non-negative) conductances defined on a probability space $( \Omega, \PP)$. The expectation with respect to $\PP$ is denoted by $\E$. We write $\omega_e = \omega_{xy} = \omega (x, y)$ and $x \sim y$ if $\{x, y\} \in E_d$. For any realization of the environment $\omega \in \Omega$, set
\begin{equation}
\label{tp}
\pi ( x ) = \sum_{y:| x - y | =1} \omega_{xy}, \qquad \p_\omega (x, y) = \frac{\omega_{xy}}{\pi (x)}, \qquad \pi(A) = \sum\limits_{ x \in A}\pi(x).
\end{equation}

With these settings, we associate $X = (X_n)$ the Markov chain with state-space $\Z^d$ and transition probability $\p_\omega ( x, y)$ and let $P^x_\omega$ denote 
the \textit{quenched} law of $X$, started from $x$ with
\begin{equation}
P^x_\omega ( X_0 = x ) = 1
\end{equation}
Let $\p^n_\omega$ denote the $n$-th power of the transition kernel $P_\omega$, i.e.,
\begin{equation}\label{transprob}
\cmss P^n_\omega (x, y) = P^x_\omega (X_n = y).
\end{equation}
$X$ is then a random walk in the random environment $\omega$. 

Assume that $\PP (\omega_e > 0 ) > p_c (d)$ where $p_c (d)$ is the critical value for Bernoulli bond percolation on $\Z^d$, and let $\scrC$ be the infinite cluster where bonds are open if they have positive conductances (see \cite{G}). When $\PP (\omega_e > 0 ) $ is sufficiently close to one, the set $\Z^d \setminus \scrC$ is a union of finite clusters.

Let $o \in \Z^d$ denote the origin and define the conditional measure
\begin{equation}
\PP_o ( \cdot ) := \PP ( \cdot \mid o \in \scrC ).
\end{equation}

\medskip
The RCM model (see \cite{T}) has witnessed an intense research activity in recent years.
The particularity of the RCM is what we call anomalous behavior of the probability of return in the sense that its decay at time $2n$ 
is slower than the standard one, i.e. slower than $n^{- d/2}$. 
This phenomenon that depends on the environment law $\PP$ was first observed and investigated for the case of \textit{annealed} (averaged on the environment) return probabilities in Fontes and Mathieu \cite{Fontes-Mathieu}.
The reason for such  behavior for a random walk among random conductances is the trapping effect of clusters of small conductances in the environment (these clusters are often referred to as holes). Then, for the \textit{quenched} ($\PP$-a.s.) case, we have \cite{BBHK} where it was proved anomalous decay for $d \ge 5$ for heavy-tailed conductances. Afterwards, in  \twocite{B1}{BKM}, it has been shown that the transition from a normal decay with rate $n^{-d/2}$ to a slower decay happens in the class of power tail laws of the environment.
As for $d = 4$, there is still anomality which has been established in~\cite{BiBo}.

In the present work, our main result is the following which basically reconsiders and improves the result and techniques in \cite{B1}.

\begin{theorem}
\label{th}
Let $d \ge 2$ and $\alpha \in (0,1/2)$. Suppose that the environment is governed by bounded i.i.d. random conductances such that $\PP (\omega_e \in [0, 1]) = 1$ and 
\begin{equation}
\label{C}
\tag{C}
( 4 d - 2 ) \, \lim_{u \to 0} \frac{\log \PP \big(\omega_b \in [u, 2u]\big)}{\log u} <  \alpha,
\end{equation}
Then, there exists $c>0$, such that $\PP_o$-a.s. for any $n$ large enough, we have
\begin{equation}
\label{alb}
\p^{2n}_\omega (o,o) \ge c\,\pi (o)\, n^{- (2 + \alpha (d-1))}
\end{equation}
\end{theorem}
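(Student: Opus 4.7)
The approach is the trap-based proof for anomalous heat-kernel lower bounds originating in \cite{BBHK} and developed for the one-sided small-conductance setting in \cite{B1}: exhibit, $\PP_o$-almost surely, a local trap near $o$ inside which the walker spends essentially all of the time window $[0,2n]$, and read off the corresponding lower bound on the return probability.

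I would call a pair of neighboring vertices $\{x,y\}$ an $\eps$-trap if $\omega_{xy} \ge \eta$ for a fixed $\eta > 0$ (so the pair is strongly bound together) and each of the $4d - 2$ remaining edges incident to $x$ or $y$ has conductance in $[\eps, 2\eps]$ (so the pair is weakly coupled to its complement). Condition (C) gives, for $\eps$ small enough, $\PP(\omega_b \in [\eps, 2\eps]) \ge \eps^{\alpha/(4d-2)}$, so the probability that a prescribed pair of neighbors is an $\eps$-trap is at least $c\,\eps^{\alpha}$. Setting $\eps_n := 1/n$, a Borel--Cantelli argument over disjoint candidate pairs inside a ball $B(o, r_n)$ with $r_n = n^{\alpha/d + \delta}$ (for a small $\delta > 0$) produces, $\PP_o$-a.s.\ and for every sufficiently large $n$, an $\eps_n$-trap $T_n = \{x_n, y_n\} \subset B(o, r_n)$. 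By reversibility,
\begin{equation*}
\p^{2n}_\omega(o,o) \ge \frac{\pi(o)}{\pi(y_n)}\,\bigl[\p^n_\omega(o,y_n)\bigr]^2,
\end{equation*}
with $\pi(y_n) \le 1 + (2d-1)\eps_n \le 2$. The inner factor is bounded below by the following scenario: the walk hits $y_n$ by some time $\tau \le r_n^2$ and then oscillates between $x_n$ and $y_n$ until step $n$, ending at $y_n$. Since $\eps_n = 1/n$, the per-step exit probability from $T_n$ is $O(1/n)$, so both the survival-in-$T_n$ and final-position factors are bounded below uniformly.

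The main technical obstacle is a quantitative lower bound on the hitting probability $\PP^o_\omega(\tau_{y_n} \le r_n^2)$: outside $T_n$ the environment is neither uniformly elliptic nor free of other atypical small conductances, so a Gaussian heat kernel bound cannot be invoked naively. I would handle this by first excising from the cluster, in a neighborhood of $T_n$, all edges of conductance below some fixed threshold -- this is possible by another Borel--Cantelli step using only $\PP(\omega_e \ge \eta) > 0$ -- and then comparing the walk on the remaining skeleton with simple random walk on the supercritical percolation cluster via a Dirichlet form / capacity argument in the spirit of \cite{BBHK}. Balancing $r_n$ against $\eps_n$, together with the uniformly lower bounded trapping factor, finally produces the announced exponent $-(2 + \alpha(d-1))$.
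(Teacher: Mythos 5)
Your trap construction (a strong edge shielded by $4d-2$ conductances of order $1/n$, with trap probability $\gtrsim n^{-\alpha}$ under \eqref{C}) coincides with the paper's, and the reversibility bound $\p^{2n}_\omega(o,o) \ge \pi(o)\,\pi(y_n)^{-1}\,\p^{n}_\omega(o,y_n)^2$ is sound. The genuine gap is exactly the step you flag as ``the main technical obstacle'' and then leave unresolved: a quantitative lower bound on $P^o_\omega(\tau_{y_n}\le r_n^2)$. For $d\ge 3$ the probability of ever hitting a \emph{prescribed vertex} at distance $r_n$ is of order $r_n^{2-d}$, not uniformly bounded below, and entering the trap costs an additional factor of order $n^{-1}$ since every edge into $y_n$ other than the strong one has conductance $\asymp 1/n$. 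A Dirichlet-form/capacity comparison controls escape probabilities such as $P^o_\omega(\tau_{y_n}<\tau_o^+)$, not the time-constrained quantity you need; and ``excising'' all sub-threshold edges near $T_n$ is not available, because the trap is by construction attached to the rest of the cluster only through sub-threshold edges. Finally, the asserted balancing does not reproduce the theorem's exponent: with $r_n=n^{\alpha/d+\delta}$ a Green-function heuristic gives $\p^n_\omega(o,y_n)\asymp r_n^{2-d}n^{-1}$, hence an exponent $2+2(d-2)(\alpha/d+\delta)$ rather than $2+\alpha(d-1)$; this mismatch signals that the point-hitting route has not actually been carried through.

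The paper avoids the point-hitting problem altogether. It does not aim the walk at a prescribed trap; it lets the walk find one on its own path: at the successive exit points $X_{H_{3k}}$, $k<n^\alpha/3$, the events that a trap sits just outside the current shell are independent under $\PP\times P^o_\omega$, so with $\asymp n^\alpha$ trials of success probability $\gtrsim n^{-\alpha}$ a trap is met a.s.\ (Lemma~\ref{trap-l}). Reaching it costs only the time bound $E^o_\omega(H_{n^\alpha})\lesssim n^{2\alpha}(\log n)^{2d}=o(n)$ from Lemmas~\ref{Etau^} and~\ref{Etau} (this is where $\alpha<1/2$ enters), plus the single factor $n^{-1}$ to step over a weak edge, giving $P^o_\omega(X_n\in B^\circ_K)\ge c\,n^{-1}$. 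The price for not knowing where in the annulus the trap lies is a Cauchy--Schwarz over $B^\circ_K$, whose volume $\asymp n^{\alpha(d-1)}$ is precisely the source of the exponent $\alpha(d-1)$. To salvage your route you would need a quenched lower bound of the form $P^o_\omega(\tau_{y_n}\le Cn)\ge c\,r_n^{2-d}n^{-1}$ in a non-elliptic i.i.d.\ environment, a substantially harder statement than anything used in the paper.
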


\medskip

\begin{remarks}\label{rem}
$(1)$ The lower bound \eqref{alb} is to be compared with general upper bounds from \cite[Theorem~2.1]{BBHK}, and the general lower 
bound $\p^{2n}_\omega (o,o) \ge c\, n^{- d/2}$ $($see for example \cite[Remark 1.3]{B1}$)$. Then we observe that we always have a normal decay in $d =2,3$ and our lower bound \eqref{alb} is interesting for $d \ge 5$ and 
$$\alpha < \frac12\, \frac{d-4}{d-1}$$ 

\medskip
\noindent
$(2)$ In the special case with i.i.d. polynomial lower tail conductances, i.e. 
\begin{equation}
\label{LP}
\tag{LP}
\PP ( \omega_e < u ) = u^\gamma ( 1 + o (1)), \quad u \to 0,
\end{equation} 
one can easily see that the condition \eqref{C} becomes $\gamma < {\alpha}/{(4 d -2)}.$
This last condition on $\gamma$ and the estimate \eqref{alb} yield
\begin{equation}
\liminf_n \frac{\log \p^{2n}_\omega (o,o)}{\log n} \ge - 2 - (d-1) (4d - 2) \gamma.
\end{equation}
\end{remarks}

\newpage
\section{Proof of Theorem~\ref{th}}\label{S1}

Henceforth, we let $d \ge 2$ and assume that the conductances $\{ \omega_e, e \in E_d \}$ are i.i.d. random variables with $\omega_e \in [0, 1]$ a.s. and $\PP (\omega_e > 0 ) > p_c (d)$. Let $\scrC$ denote the infinite cluster along positive conductances. (See \cite{G}).

Henceforth, we call a box centered at the origin $B_r$ of radius $r$ on $\Z^d$, the set
$$
B_r = [- r, r]^d \cap \Z^d,
$$
and $\partial B_r$ the frontier of $B_r$, that is, the sites of $B_r$ with a neighbor outside $B_r$.

Now pick a $\xi > 0$ such that $\PP (\omega_e \ge \xi ) > p_c (d)$. There exists then almost surely a unique infinite cluster of bonds with conductances larger than $\xi$ that we denote by $\scrC_\xi$.  It also represents the set of all sites in $\Z^d$ that have a path to infinity along edges with conductances at least $\xi$.
Note that $\scrC_\xi \subset \scrC.$ When $\xi$ is small enough, the components of $\scrC \setminus \scrC_\xi$ that we call \textit{holes} are finite a.s. and in a box of radius $n$, they have logarithmic size (see \cite[Lemma~4.1]{BKM}). For $x\in \scrC$, call $\scrH_x$ be the set of vertices in the finite component of $\scrC \setminus\scrC_\xi$ containing $x$.
The condition \eqref{C} supposes that the conductances law is heavy tailed and hence $\bigcup_{\xi>0} \scrC_\xi  = \Z^d.$

We are now interested in the expected time spent by the random walk in a box $B_r$. First,  we have to consider the so called coarse-grained walk $X^\xi$ which records the successive visits of $X$ to $\scrC_\xi$. Set $T_0 := 0$ and define
\begin{equation}
T_{\ell+1} := \inf \{ n > T_0 + \ldots+ T_\ell : X_n \in \scrC_\xi \} - (T_0 + \ldots+ T_\ell), \quad \ell \ge 1.
\end{equation}
For $\xi$ small enough, all components of $\scrC \setminus \scrC_\xi$ are finite a.s., and then $T_\ell < \infty$ a.s. for all $\ell$. 
These visits occur at the locations
\begin{equation}
X^\xi_\ell := X_{T_0 +\cdots+ T_\ell}.
\end{equation}
The sequence $(X^\xi_n)$ is a Markov chain on $\scrC_\xi$ with transition kernel given by
\begin{equation}
\p_\xi ( x, y) := P^x_\omega (X_{T_1} = y).
\end{equation}
We easily check that the restriction of the measure $\pi$ to $\scrC_\xi$ is invariant and reversible for the Markov chain on $\scrC_\xi$ induced by $\p_\xi$.
Set 
$\hat\tau_r = \inf \{ \ell \ge 0: X^\xi_\ell \notin B_r \}.
$ 
\begin{lemma}
\label{Etau^}
For almost every $\omega \in \{ o \in \scrC\}$, for $\xi$ small enough, there exists $c = c(d, \xi) < \infty $ such that for any $r$ large enough,
\begin{equation}
E^ o_\omega\big( \hat\tau_r\big) \le c\, r^{2}
\end{equation}
\end{lemma}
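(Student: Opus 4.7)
The strategy exploits the fact that $X^\xi$ is a reversible Markov chain on $\scrC_\xi$, which, by the choice of $\xi$, is the infinite open cluster of a supercritical Bernoulli bond percolation on $\Z^d$. The desired diffusive exit-time estimate will then follow from the classical heat-kernel bounds for random walks on the supercritical cluster.

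\medskip
\noindent\emph{Step 1 (Dirichlet form comparison).} The trace chain $X^\xi$ is reversible with respect to $\pi$ restricted to $\scrC_\xi$, a standard property of induced chains, and its effective conductances $c^\xi(x,y) := \pi(x)\p_\xi(x,y)$ satisfy $c^\xi(x,y) \ge \omega_{xy} \ge \xi$ for every pair $x, y \in \scrC_\xi$ joined by a strong edge (one with $\omega_{xy} \ge \xi$), by Rayleigh monotonicity applied to the trace construction of the electrical network $(\Z^d, (\omega_e))$. Hence the Dirichlet form of $X^\xi$ dominates that of the simple random walk on $\scrC_\xi$ with unit weights on strong edges:
\begin{equation*}
\EE_\xi(f,f) \ge \xi \, \EE_{\scrC_\xi}(f,f), \qquad f \in \R^{\scrC_\xi}.
\end{equation*}

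\medskip
\noindent\emph{Step 2 (diffusivity and conclusion).} Since $\PP(\omega_e \ge \xi) > p_c(d)$, the cluster $\scrC_\xi$ is supercritical, and the classical work of Barlow (together with Mathieu--Remy) yields, for $\PP_o$-a.e.\ $\omega$ and all $r$ large enough, Gaussian heat-kernel bounds for the simple random walk on $\scrC_\xi$; in particular its expected exit time from $B_r$ is of order $r^2$, as a consequence of the associated Nash (or Faber--Krahn) inequality on $\scrC_\xi \cap B_r$. By the Dirichlet form comparison of Step 1, the same inequality transfers to $X^\xi$ with an extra factor $1/\xi$ (the invariant measures being identical), which then converts, via the Poisson equation $(I - \p_\xi) u = 1$ on $\scrC_\xi \cap B_r$ with zero boundary condition, into the sought bound $E^o_\omega[\hat\tau_r] \le c(d,\xi)\, r^2$.

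\medskip
\noindent\emph{Main obstacle.} The main technical input is the diffusive behaviour of random walks on supercritical percolation clusters, which is well established but non-trivial and must be invoked uniformly in $r$ for $\PP_o$-a.e.\ environment. Conceptually, the most delicate point is the comparison of $X^\xi$ with the simple random walk on $\scrC_\xi$: the trace chain $X^\xi$ exhibits arbitrarily long-range jumps induced by excursions through the holes $\scrH_x$, whereas the reference random walk is nearest-neighbor. The Dirichlet form approach above bypasses this by retaining only the strong-edge contribution to $\EE_\xi$, which is already sufficient to inherit the diffusive scale.
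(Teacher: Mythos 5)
Your proposal is correct in substance and, at bottom, rests on the same input as the paper: everything reduces to the standard diffusive on-diagonal decay of the trace chain $X^\xi$ on $\scrC_\xi$, and your Step~1 together with Barlow's bounds for the supercritical cluster is precisely how the paper's cited ingredient, \cite[Lemma~3.2]{BBHK}, is proved --- the paper simply invokes that lemma as a black box instead of re-deriving it. Where you genuinely diverge is in the conversion to the exit-time bound, and there your argument is under-specified. The paper's route is elementary: choose $n = c_* r^2$ so that $P^x_\omega(X^\xi_n \in B_r) \le \sum_{y \in \scrC_\xi \cap B_r} \p^n_\xi(x,y) \le c_1 c_2 n^{-d/2} r^d \le 1/2$ uniformly over $x \in \scrC_\xi \cap B_r$, deduce $P^o_\omega(\hat\tau_r > kn) \le 2^{-k}$ by the Markov property, and sum. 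Your appeal to the Poisson equation $(I - \p_\xi)u = \mathbf 1$ names the right object, but a Nash or Faber--Krahn inequality by itself controls $\pi$-averaged quantities such as $\langle \mathbf 1, G_{B_r}\mathbf 1\rangle_\pi$, not the pointwise value $u(o) = E^o_\omega(\hat\tau_r)$; to reach the pointwise bound you must pass through the on-diagonal heat-kernel upper bound in any case, and then some version of the geometric-tail iteration is needed (note in particular that for $d=2$ the naive summation of $P^o_\omega(\hat\tau_r > n) \le C n^{-d/2} r^d$ over $n$ diverges, so the iteration is not optional there). Two smaller points: the invariant measures of $X^\xi$ and of the unit-weight walk on $\scrC_\xi$ are not identical but only uniformly comparable (since $\xi \le \pi(x) \le 2d$ on $\scrC_\xi$), which is all you need; and the conductance comparison $c^\xi(x,y) \ge \omega_{xy}$ follows directly from the one-step contribution $\pi(x)\p_\xi(x,y) \ge \pi(x)\,\omega_{xy}/\pi(x)$, with no need to invoke Rayleigh monotonicity.
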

\begin{proof}
Let $r >0$ and set $n = c_* r^2$ where $c_*>0$ is chosen later. Choose $\xi$ small enough such that $\{o \in \scrC_\xi \}$ and \cite[Lemma~4.1]{BKM} holds. From \cite[Lemma~3.2]{BBHK}, the return probabilities decay of $X^\xi$ is standard. Thus for $x \in \scrC_\xi \cap B_r$, we have for $n$ large enough
\begin{equation}
\label{eteq}
P^x_\omega ( X^\xi_n \in B_r ) \le \sum_{y \in \scrC_\xi\cap B_r} \cmss P^n_\xi (x,y) \le c_1 c_2 n^{-d/2} r^d \le \frac12,
\end{equation}
where we choose $c_* \ge (2 c_1 c_2)^{2/d}$. Hence, $P^x_\omega ( \hat\tau_r > n ) \le 1/2$,
and the Markov property then gives, for $k$ a positive integer,
\begin{equation}
\label{taurec}
P^o_\omega ( \hat\tau_r > (k+1) n ) \le P^o_\omega \Big( P^{X^\xi_{kn}}_{\omega} ( \hat\tau_r > n ); \hat\tau_r > k n \Big)
 \le \frac12 P^o_\omega ( \hat\tau_r > k n ).  
\end{equation}
Hence by induction, we have $P^o_\omega ( \hat\tau_r > k n ) \le 1/2^k$,
which yields 
\begin{eqnarray*}
E^ o_\omega ( \hat\tau_r ) 
= 
\sum_{k\ge 0}  E^ o_\omega \big( \hat\tau_r; k n< \hat\tau_r \le (k+1) n \big) 
\le
n\, \sum_k \frac{k+1}{2^k}
\le c\, n
\end{eqnarray*}
\end{proof}

Recall $\scrH_y$. Then, for any $x \in \scrC_\xi$, set 
\[\scrF_x =\bigcup_{y:\, \omega_{xy} >0} \scrH_y, \quad \scrF'_x = \{x\} \cup \scrF_x  \] 
and let 
$$
\scrG_x= \scrF'_x \bigcup \bigcup_{y \in \scrC_\xi, y \sim x} \scrF'_y.
$$ 
Note that $\scrG_x$ is the set of sites where $X$ can be hidden before stepping again onto the strong component $\scrC_\xi$.
Then, for
$
H_r = \inf \{ k \ge 0: X_k \in \partial B_r \}
$ 
we have:

\begin{lemma}
\label{Etau}
For $\PP$-a.e. $\omega \in \{o \in \scrC\}$, for $\xi>0$ small enough, there exists $c =c (d, \xi) < \infty$ such that for $r$ large enough, we have
\begin{equation}
\label{Etau1}
E^ o_\omega \big( H_{r}\big) \le c\, r^2\, \sup_{x \in \scrC_\xi \cap B_r} | \scrG_x |.
\end{equation}
\end{lemma}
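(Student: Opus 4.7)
I would dominate $H_r$ by the total time it takes for the coarse-grained walk $X^\xi$ to leave $B_r$, and then estimate a single excursion $T_1$ of $X$ between two consecutive visits to $\scrC_\xi$ using reversibility.

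Pathwise, $H_r \le T_0 + T_1 + \cdots + T_{\hat\tau_r}$: at the right-hand time $X$ sits at $X^\xi_{\hat\tau_r} \notin B_r$, hence has crossed $\partial B_r$ earlier. Taking $E^o_\omega$ and applying the strong Markov property at each stopping time $T_0 + \cdots + T_\ell$ (at which $X = X^\xi_\ell \in \scrC_\xi \cap B_r$ whenever $\ell < \hat\tau_r$),
\[
E^o_\omega[H_r] \;\le\; \Big(\sup_{x \in \scrC_\xi \cap B_r} E^x_\omega[T_1]\Big)\, E^o_\omega[\hat\tau_r],
\]
and Lemma~\ref{Etau^} controls the last factor by $c\, r^2$.

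For the excursion bound, fix $x \in \scrC_\xi$ and, for any $z$, let $N_z$ count the visits of $X$ to $z$ during $[1, T_1 - 1]$. Such visits all lie in $\scrG_x$, so $E^x_\omega[T_1] = 1 + \sum_{z \in \scrG_x} E^x_\omega[N_z]$. A first-step decomposition together with the reversibility identity $\pi(y)G(y,z) = \pi(z)G(z,y)$ for the Green's function $G$ of $X$ killed on $\scrC_\xi$ gives
\[
E^x_\omega[N_z] \;=\; \sum_{\substack{y \sim x \\ y \notin \scrC_\xi}} \frac{\omega_{xy}}{\pi(x)}\, G(y,z) \;=\; \frac{\pi(z)}{\pi(x)} \sum_{\substack{y \sim x \\ y \notin \scrC_\xi}} \frac{\omega_{xy}}{\pi(y)}\, G(z,y).
\]
Reading $\omega_{xy}/\pi(y) = P^y_\omega(X_1 = x)$ as a transition probability, the last sum is the expected number of times the walk started at $z$ steps from a hole-neighbor of $x$ onto $x$ before hitting $\scrC_\xi$; since this event absorbs the excursion, the sum is at most $1$. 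Thus $E^x_\omega[N_z] \le \pi(z)/\pi(x)$, and summing over $z \in \scrG_x$ together with $\pi(z) \le 2d$ (from $\omega_e \le 1$) and $\pi(x) \ge \xi$ (since $x \in \scrC_\xi$ carries at least one $\scrC_\xi$-edge) yields
\[
E^x_\omega[T_1] \;\le\; 1 + \frac{\pi(\scrG_x)}{\pi(x)} \;\le\; 1 + \frac{2d}{\xi}\,|\scrG_x|,
\]
which combined with the first display gives \eqref{Etau1}.

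The main substantive step is the reversibility manipulation in the middle display: it turns what would naively be a quadratic-in-$|\scrG_x|$ bound (from a crude exit-time estimate inside a hole) into a linear one, by arranging that the factor $\omega_{xy}/\pi(y)$ is recognized as a transition probability and telescopes into a single absorption event. Without this trick, small conductances inside the hole would produce a large factor $1/\pi(y)$ that cannot be controlled.
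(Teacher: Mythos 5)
Your proof is correct and follows the same decomposition as the paper: the pathwise bound $H_r \le \sum_{\ell \le \hat\tau_r} T_\ell$, the strong Markov property to factor out $\sup_{x \in \scrC_\xi \cap B_r} E^x_\omega[T_1]$, and Lemma~\ref{Etau^} to control $E^o_\omega[\hat\tau_r]$ by $c\,r^2$. The only difference is that the paper imports the excursion bound $E^x_\omega[T_1] \le c(d,\xi)\,|\scrG_x|$ as a black box from \cite[Lemma~3.8]{BBHK}, whereas you prove it from scratch; your Green's-function/reversibility argument (including the observation that the absorbed sum is at most $1$ and the bounds $\pi(z)\le 2d$, $\pi(x)\ge\xi$) is sound and is essentially the proof of that cited lemma.
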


\begin{proof}
Choose $\xi$ small enough such that Lemma~\ref{Etau^} holds. Then, observe that  
\begin{equation}
\label{tT}
H_r \le \tau_r \le \sum_{\ell=0}^{\hat\tau_r}  T_\ell.
\end{equation}
By the Markov property, we obtain therefore
\begin{equation}
E^ o_\omega \Big(\sum_{\ell=0}^{\hat\tau_r}  T_\ell\Big) 
\le E^ o_\omega \Big( \sum_{\ell=0}^{\hat\tau_r}  E^{X^\xi_\ell} (T_1)\Big)
\le 
\sup_{x \in \scrC_\xi \cap B_r} E^{x} (T_1)\,  E^ o_\omega \big(\hat\tau_r\big)
\end{equation}
Then use Lemma~\ref{Etau^} and \cite[Lemma~3.8]{BBHK} to get the desired estimate.
\end{proof}

For the following, we introduce a special configuration of the environment that we call \textit{trap}. 
Explicitly, let $\mathcal A_n$ be the event in the environment defined as follows: \textit{an edge $e = \{y,z\}$ with conductance $\omega_e \ge c > 0$, which we call the strong edge, is such that for any $e' \neq e$ incident with either $y$ or $z$, $\omega_{e'} \in [1/n,2/n]$}. The configuration $\mathcal A_n$ is then made up of a strong edge and of $4d - 2$ weak edges with conductances of order $1/n$.
Let $\AAA_n (x)$ be the event on the space of environments that $x$ is a vertex neighboring a trap
edge, which trap is situated outside the hypercubic box $B_{\max x_i}$, where the $x_i, i= 1,\ldots,d$ are the associated coordinates of $x$.
We say that the walk will meet a trap $\AAA_n$ at a time $k$ if we have $\AAA_n (X_k)$.

As we proceeded in recent works (see \twocite{B1}{BiBo}), to get the anomalous lower bound \eqref{alb}, we let the walk fall into a trap. 
The following lemma proves that the walk before exiting a box $B_{n^\alpha}$ will meet a.s. a trap $\AAA_n$.

\begin{lemma}
\label{trap-l}
Suppose that the condition \eqref{C} holds and set $r = n^\alpha$.
Then, there exist constants $c, \beta$ such that for $\PP$-a.e $\omega \in \{o \in \scrC\}$, there exists $N = N(\omega) < \infty$ a.s. such that for all $n \ge N$,
\begin{equation}
P^o_\omega \Big( \bigcap_{k=0}^{r/3-1} \AAA_n (X_{H_{3 k}})^c \Big) 
\le e^{- c n^\beta}.
\end{equation}
\end{lemma}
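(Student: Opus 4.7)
The plan is to pass to the joint measure $Q := \PP \otimes P^o_\omega$, exploit that the trap events $\AAA_n(X_{H_{3k}})$ for successive $k$ depend on essentially disjoint pieces of the environment, and then transfer the resulting exponential bound to the quenched law via Markov plus Borel--Cantelli. I would first lower-bound the single-site probability $p_n := \PP(\AAA_n(x))$. Choose $\alpha' \in (0,\alpha)$ with $(4d-2)\,\lim_{u\to 0} \log\PP(\omega_b\in[u,2u])/\log u < \alpha'$, as allowed by \eqref{C}; then $\PP(\omega_b\in[1/n,2/n]) \ge n^{-\alpha'/(4d-2)}$ for $n$ large. Since $\AAA_n(x)$ asks for one strong edge (positive probability bounded below by $\PP(\omega_b\ge c_0)$) together with $4d-2$ specific weak edges of conductance in $[1/n,2/n]$, this yields $p_n \ge c_1\, n^{-\alpha'}$ uniformly in $x$.

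Next I would run the inductive bound on the nested boxes $B_{3k}$ for $k=0,1,\ldots,K$, with $K := \lfloor n^\alpha/3\rfloor$. Introduce the filtration
$$\scrG_k := \sigma\big(X_0,\ldots,X_{H_{3k}}\big) \vee \sigma\big(\omega_e : e \subset B_{3k}\big).$$
The key geometric fact is that a trap based at $x\in\partial B_{3k}$ is carried by edges whose endpoints lie within distance $3$ outward of $x$, so it sits inside the annulus $B_{3(k+1)}\setminus B_{3k}$. Conditional on $X_{H_{3k}}$, the environment on this annulus is $Q$-independent of $\scrG_k$, and so
$$Q\big(\AAA_n(X_{H_{3k}})^c \,\big|\, \scrG_k\big) \le 1 - p_n.$$
Iterating over $k$ (for $j<k$ the trap at step $j$ lies in $B_{3(j+1)}\subset B_{3k}$ and is thus $\scrG_k$-measurable) gives
$$Q\Big(\bigcap_{k=0}^{K-1}\AAA_n(X_{H_{3k}})^c\Big) \le (1-p_n)^K \le \exp\big(-c\, n^{\alpha-\alpha'}\big).$$
Setting $\beta := \alpha-\alpha'$, Markov's inequality yields $\PP\big(P^o_\omega(\cdot) > e^{-cn^\beta/2}\big) \le e^{-cn^\beta/2}$, which is summable in $n$; Borel--Cantelli then delivers the claimed $\PP$-a.s.\ bound, after absorbing constants.

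The main obstacle is setting up the filtration so that successive trap events are genuinely independent across scales. A minor technical point is that an edge $\{x,y\}$ with $x\in\partial B_{3k}$ and $y$ just outside enters both the trap event $\AAA_n(x)$ and the walk's transition probabilities at $x$, so strictly speaking $\scrG_k$ must be enlarged to include such bridging edges, or else one verifies that the walk's Bayesian information about them does not bias them away from the range $[1/n,2/n]$; in either case the constant $c_1$ in $p_n\ge c_1\,n^{-\alpha'}$ needs only a harmless adjustment. The choice of spacing $3$, which matches the diameter of a single trap configuration, is precisely what localises the trap for step $k$ inside $B_{3(k+1)}\setminus B_{3k}$, disjoint from every other trap region and from the bulk of $\omega|_{B_{3k}}$; once this geometric disjointness is secured, the Markov-style conditioning and the exponential tail bound are routine.
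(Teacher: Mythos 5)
Your proposal follows essentially the same route as the paper: pass to the joint (annealed) measure, exploit that the trap regions attached to successive exit points $X_{H_{3k}}$ live in disjoint annuli to get a product bound $(1-p_n)^{K}\le e^{-cn^{\alpha-\alpha'}}$, then transfer to the quenched law by Markov's inequality and Borel--Cantelli. The only difference is presentational: where the paper simply asserts that the events $\AAA_n(X_{H_{3k}})$ are independent under $\PP\times P^o_\omega$ "by construction," you make this precise with the filtration $\scrG_k$ and the observation that the walk up to $H_{3k}$ only sees edges inside $B_{3k}$ while the trap for step $k$ sits outside it --- a correct and slightly more careful rendering of the same argument.
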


\begin{proof}
Let $\alpha > 0$ and set $r = n^\alpha$. 
Set $\overline \PP = \PP \times P^ o_\omega$. By construction, the events $\AAA_n (X_{H_{3k}})$ are $\overline \PP$-independent, and we have for every $k$,
\begin{equation}
\overline \PP  \big(\AAA_n (X_{H_{3k}})\big) \ge \PP (\AAA_n) \ge \PP (\omega_e \ge \ffrac12)\, 
\PP \big(\omega_e \in [\ffrac1n, \ffrac2n]\big)^{4d-2}.
\end{equation}
Therefore, it comes that for $n$ large enough
\begin{equation}
\label{4d-2}
\overline \PP \left(\bigcap_{k=0}^{r/3-3} \AAA_n (X_{H_{3 k}})^c\right) 
\le 
 \exp \Big( - c n^\alpha \PP \big(\omega_e \in [\ffrac1n, \ffrac2n]\big)^{4d-2} \Big). 
\end{equation}
Then set $p (\omega) = P^o_\omega \left(\bigcap_{k=0}^{r/3-3} \AAA_n (X_{H_{3 k}})^c\right)$ 
and observe that for positive $c', \beta$ that we choose later, Chebychev inequality yields
\begin{equation*}
\PP \Big( \omega : p (\omega) > e^{ - c' n^\beta} \Big) 
\le e^{c' n^\beta} \E \big(p (\omega)\big)
\le 
\exp \big( c' n^\beta - c n^\alpha \PP (\omega_e \in [\ffrac1n, \ffrac2n])^{4d-2} \big).
\end{equation*}
Thus the claim follows by Borel-Cantelli lemma if \eqref{C} holds and we choose in the last expression $c' = c/2$ and
$$
\beta = \alpha - ( 4 d - 2 ) \, \lim_{u \to 0} \frac{\log \PP ( \omega_b\in [u, 2 u] )}{\log u}.
$$ 
\end{proof}

Let $r\ge 3$ and define $K$ as the first rank such that $\AAA_n(H_{3k})$ happens, i.e.
$$
K = 
\inf \{ k \in \{0, \ldots,r/3-1\}: \AAA_n (X_{H_{3k}}) \}; \quad \text{with}\, \inf \emptyset = \infty.
$$

\begin{lemma}
\label{XinB}
Let $\alpha \in (0,1/2)$ and $r = n^\alpha$. Then $\PP_o$-a.e. $\omega$, there exists $c = c(d,\xi) > 0$ such that for $n$ large enough, we have
\begin{equation}
P^ o_\omega (X_n \in B_{3 (K+1)} \setminus B_{3 K}) \ge c \,n^{-1}
\end{equation}
\end{lemma}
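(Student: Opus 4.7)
The strategy is to exploit the trap $\AAA_n$ found at the random time $H_{3K}$: once the walk reaches a neighbor of a trap it can step inside with probability of order $1/n$, after which it is held there long enough to still be in the trap at the exact time~$n$.

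First, I would combine Lemma~\ref{Etau} with the logarithmic hole-size bound from \cite[Lemma~4.1]{BKM}, namely $\sup_{x\in\scrC_\xi\cap B_r}|\scrG_x|=O(\log n)$, to get $E^o_\omega(H_r)\le c\, r^2\log n=c\, n^{2\alpha}\log n$, which is $o(n)$ since $\alpha<1/2$. Combined with Lemma~\ref{trap-l} and Markov's inequality, this gives, for $\PP_o$-a.e.\ $\omega$ and all $n$ large enough, that the event $\{K<r/3,\; H_{3K}\le n/2\}$ has $P^o_\omega$-probability at least $1/2$.

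Next, I would apply the strong Markov property at the stopping time $H_{3K}$. Writing $x=X_{H_{3K}}\in\partial B_{3K}$ and letting $\{y,z\}$ be the trap edge with $x\sim y$, the definition of $\AAA_n$ gives $\omega_{yz}\ge 1/2$ while every other edge incident to $y$ or $z$ (including $\{x,y\}$) has conductance in $[1/n,2/n]$. Since all conductances are bounded by~$1$, this forces $\pi(x)\le 2d$ and $\pi(y),\pi(z)\le 1+(4d-2)/n$. The plan is then to lower-bound the single trajectory ``step from $x$ to $y$, then remain in $\{y,z\}$ for the remaining $n-H_{3K}-1$ steps'': the first step has probability at least $(2dn)^{-1}$, and at each subsequent step the probability to stay on the trap edge is at least $1-C/n$ for some $C=C(d)$; on $\{H_{3K}\le n/2\}$ and for $n$ large, the product is therefore bounded below by $c_1/n$.

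Integrating via the strong Markov property yields $P^o_\omega(X_n\in\{y,z\})\ge c/n$, and the geometric conclusion is immediate: $x\in\partial B_{3K}$ forces $\max_i|x_i|=3K$, so the definition of $\AAA_n$ places the trap outside $B_{3K}$, while $\{y,z\}\subset B_{3K+2}\subset B_{3(K+1)}$, giving $\{y,z\}\subset B_{3(K+1)}\setminus B_{3K}$. The main obstacle I expect is the uniform bookkeeping of the conductance profile at the three vertices $x,y,z$: one needs the entering probability $\sim 1/n$ and the geometric-type holding probability $(1-C/n)^{n}$ to come out with constants independent of $K$ and of the value of $H_{3K}$, which is exactly where the full $\AAA_n$ structure---and the bound $\omega_e\le 1$---has to be used.
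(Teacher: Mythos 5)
Your proposal is correct and follows essentially the same route as the paper: use Lemma~\ref{trap-l} to ensure a trap is met, Lemma~\ref{Etau} together with the logarithmic bound on $|\scrG_x|$ plus Markov's inequality to ensure $H_{3K}$ is small compared to $n$ with high probability, then the strong Markov property at $H_{3K}$ with an entering cost of order $(2dn)^{-1}$ and a constant-order holding probability $(1-C/n)^n$ on the strong edge. The only cosmetic difference is that you condition on $\{H_{3K}\le n/2\}$ having probability at least $1/2$, whereas the paper writes $P^o_\omega(H_{3K}<n,K<\infty)\ge P^o_\omega(K<\infty)-n^{-1}E^o_\omega(H_{3K};K<\infty)$; these are the same estimate.
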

\begin{proof}
Let $\{y,z\}$ be the strong edge of the trap $\AAA_n$ and define the events 
$$D_n =  \cap^{n}_{j =1} \big\{ X_j \in \{y,z\}\big\}.$$
Note that $P^ o_\omega (D_n) \ge ( 1 - (4d -2)/n)^n \ge e^{- (4d-2)}/2$.

If the walk crosses the edge of a trap $\AAA_n$ with conductance larger than $n^{-1}$, it will cost a probability larger than $1/(2dn)$.  Then the Markov property and Chebychev inequality give the following
\begin{eqnarray*}
P^ o_\omega \big(X_n \in B_{3 (K+1)} \setminus B_{3 K}\big) 
&\ge& 
E^ o_\omega \Big(\1_{\{H_{3K}< n\}} \, P^{X_{H_{3K}}}_\omega (X_{H_{3K}+1} = y) \, P^{y}_\omega \big( D_n\big); K < \infty \Big)\\
&\ge&
\frac{e^{-(4d-2)}/2}{2d}\, n^{-1}\, P^ o_\omega \big(H_{3K} < n, K < \infty\big)\\
&=&
\frac{e^{-(4d-2)}/2}{2d}\, n^{-1}\, \Big(P^ o_\omega \big(K < \infty\big) - n^{-1}\, E^ o_\omega \big(H_{3K}; K < \infty\big)\Big)
\end{eqnarray*}
On the one hand, remark that by Lemma~\ref{trap-l}, $P^ o_\omega \big(K < \infty\big)$ tends to $1$. 
On the other hand, from \cite[Lemma~4.1]{BKM} and Borel-Cantelli, for $n$ large enough
\begin{equation}
\label{hlog}
\sup_{x\in \scrC_\xi \cap B_{n^\alpha}} | \scrG_x | \le ( \log n )^{2d}.
\end{equation}
Then Lemma~\ref{Etau} yields that for $\xi$ small enough and $n$ large enough
\begin{equation}
n^{-1}\, E^ o_\omega \big(H_{3K}; K< \infty\big) \le n^{-1}\, E^ o_\omega \big(H_{r}\big) \le c\, (\log n)^{2d}\, n^{2\alpha -1}
\end{equation}
which tends to $0$ for $\alpha < 1/2$. And the claim follows.
\end{proof}

We are now ready for the proof of Theorem.
\medskip

\begin{proof}[Proof of Theorem~\ref{th}]
Let $\alpha \in (0,1/2)$. Set $r = n^\alpha$ and $B^\circ_k = B_{3 (k+1)} \setminus B_{3 k}$.  
Now observe that by the Markov property and reversibility, we have
\begin{equation}
\label{PR-CS}
\p^{2n}_\omega (o,o) \ge \sum_{k=0}^{r/3-1} \sum_{x \in B^\circ_{k}}  \p^{n}_\omega (o,x) \p^{n}_\omega (x,o)  
\ge \sum_{k=0}^{r/3-1} \sum_{x \in B^\circ_{k}}  \p^{n}_\omega (o,x)^2 \frac{\pi (o)}{\pi (x)}
\end{equation}
Bounding $\pi (x) \le 2d$ and  using Cauchy–Schwarz, we get
\begin{equation}
\label{PR-C}
\p^{2n}_\omega (o,o) \ge \frac{\pi (o)}{2 d}\, \sum_{k=0}^{r/3-1} | B^\circ_{k} |^{-1} P^ o_\omega \big(X_n \in B^\circ_{k}\big)^2.
\end{equation}
Since for all $k \in \{0, \ldots,r/3-1\}$, $| B^\circ_k | \le c n^{\alpha (d-1)}$, and using $K$, we obtain that 
\begin{equation}
\text{the r.h.s. of \eqref{PR-C}} \ge c\, \pi (o)\, n^{-\alpha(d-1)}\, P^ o_\omega \big(X_n \in B^\circ_{K}\big)^2.
\end{equation}
Then Lemma~\ref{XinB} yields the result.
\end{proof}

\section*{Acknowledgement}
My sincere thanks to Nina, Pierre and Youcef Bey.

\end{document}